\documentclass[12pt, a4paper]{article}
\usepackage{}
\usepackage[top=2.54cm, bottom= 2.54cm, left=2.54cm, right=2.54cm]{geometry}
\def\Dj{\hbox{D\kern-.73em\raise.30ex\hbox{-}
		\raise-.30ex\hbox{}}}
\def\dj{\hbox{d\kern-.33em\raise.80ex\hbox{-}
		\raise-.80ex\hbox{\kern-.40em}}}
\usepackage{epsfig}
\usepackage{amsmath,amsthm,amsfonts,amssymb,amscd,cite}
\usepackage{mathrsfs,color}
\usepackage{latexsym}
\usepackage{graphicx}
\usepackage{color}
\usepackage{amsthm}
\allowdisplaybreaks

\newtheorem{clm}{Claim}[section]
\newtheorem{thm}{Theorem}
\newtheorem{conjecture}{Conjecture}

\newtheorem{lemma}[thm]{Lemma}

\usepackage{xcolor}

\begin{document}
	
	\vspace*{20mm}
	
	\noindent {\Large \bf The perfect divisibility and chromatic number of some odd hole-free graphs }\footnote{All authors are co-first authors with equal contributions to this paper. This work is supported by National Key Research and Development Program of China (Grants Nos.~2020YFA0712500).
			
	E-mail: hwh12@gdut.edu.cn (W. He), shiyp9@mail2.sysu.edu.cn (Y. Shi, Corresponding author), rong\_w24246@163.com (R. Wu), mcsyao@mail.sysu.edu.cn (Z. Yao).
	}
	
	\vspace{10mm}
	
	\noindent
	{\large \bf Weihua He$^a$,\, Yueping Shi$^b$,\, Rong Wu$^c$,\, Zheng-an Yao$^b$
	} \\[10mm]
	$^a${\it School of Mathematics and Statistics, Guangdong University of Technology, Guangzhou, 510520, China}\\
	$^b${\it School of Mathematics, Sun Yat-sen University, Guangzhou, 510275, China}\\
	$^c${\it School of Science, Shanghai Maritime University, Shanghai, 201306,  China}
	
	\vspace{7mm}
	
	\noindent
	%(Received November ??, 2022)
	
	\vspace{10mm}
	
	\noindent
	{\bf A B S T R A C T} \\
	A hole is an induced cycle of length at least 4, and an odd hole is a hole of odd length. It is NP-hard to color the vertices of an odd hole-free graph.  A graph $G$ is perfectly divisible if every induced subgraph $H$ of $G$ with at least one edge admits a partition of $V(H)$ into sets $A$ and $B$ such that $H[A]$ is perfect and $\omega(H[B])<\omega(H)$. $G$ is short-holed if every hole in $G$ has length 4. A hammer is the graph obtained by identifying one vertex of a $K_3$ and one end vertex of a $P_3$. 
	In this paper, we prove that (i) (odd hole, hammer, $K_{2,3}$)-free graphs are perfectly divisible, (ii) $\chi(G)\le 4\omega(G)(\omega(G)-1)$ if $G$ is short-holed and $(K_1+C_4)$-free, (iii) $\chi(G)\le 2\omega(G)-1$ if $G$ is short-holed and $(K_1\cup K_3)$-free, and (iv) $\chi(G)\le 16\omega(G)-24$ if $G$ is short-holed and $(K_1+(K_1\cup K_3))$-free.
	
	\vspace{5mm}
	
	\noindent
	{\it Classification:} 05C15, 05C17\\
	{\it Keywords:} perfect divisibility, odd hole, chromatic number\\
	
	\baselineskip=0.30in
	
	\section{Introduction}
	
	All graphs considered in this paper are finite and simple. We follow \cite{BM08} for undefined notations and terminology. Let $G$ be a graph. The {\em complement} $\overline{G}$ of $G$ is the graph with vertex set $V(G)$ and such that two vertices are adjacent in $\overline{G}$ if and only if they are not adjacent in $G$. We use $P_k$ to denote a chordless {\em path} on $k$ vertices. The {\em interior} of a path $P$ is the set of vertices of $P$ incident with two edges of $P$. The complete bipartite graph with partite sets of size $p$ and $q$ is denoted by $K_{p,q}$, and the complete graph with $l$ vertices is denoted by $K_l$. Let $v\in V(G)$, and let $X$ be a subset of $V(G)$. We say that $v$ is {\em complete} to $X$ if $v$ is adjacent to each vertex of $X$ and say that $v$ is {\em anticomplete} to $X$ if $v$ is not adjacent to any vertex of $X$. We say that $X$ is {\em complete} to $Y$ if each vertex of $X$ is complete to $Y$ and say that $X$ is {\em anticomplete} to $Y$ if each vertex of $X$ is anticomplete to $Y$. 
	
	For a vertex $v$ of a graph $G$, $N(v)$ will denote the set of vertices adjacent to $v$ (we use $N_G(v)$ if there is a risk of confusion). Two vertices $x,y$ are {\em neighbors} if $xy$ is an edge of $G$, and  {\em nonneighbors} otherwise. The closed neighborhood of $v$, denoted $N[v]$, is defined to be the set $\{v\}\cup N(v)$. We define $M(v)$ (or $M_G(v)$) to be the set $V (G)\setminus N[v]$. Given a subset $X\subseteq V(G)$, $N(X)$ (or $N_G(X)$) is the set $\{u\in V(G)\setminus X : u$ is adjacent to a vertex of $X\}$, and $M(X)$ (or $M_G(X)$) is the set $V(G)\setminus (X\cup N(X))$. The {\em union} of two vertex-disjoint graphs $G_1$ and $G_2$, denoted by $G_1\cup G_2$, is the graph with vertex set $V(G_1)\cup V(G_2)$ and edge set $E(G_1)\cup E(G_2)$, and we use $G_1+G_2$ to denote the graph with vertex set $V(G_1)\cup V(G_2)$ and edge set $E(G_1)\cup E(G_2)\cup \{xy|x\in V(G_1), y\in V(G_2)\}$.
	
	Let $u$ and $v$ be two vertices of $G$. We simply write $u\sim v$ if $uv\in E(G)$, and write $u\not\sim v$ if $uv\not\in E(G)$. We use $d(u)$ (or $d_G(u)$) to denote the degree of $u$ in $G$. The maximum degree of a graph 
	$G$, denoted by $\Delta(G)$, is defined as the maximum value among the degrees of all its vertices: $\Delta(G)=\max\{d(u):u\in V(G)\}$. Let $S$ be a subset of $V(G)$. We use $G[S]$ to denote the subgraph of $G$ induced by $S$.  For two graphs $G$ and $H$, we say that $G$ induces $H$ if $H$ is an induced subgraph of $G$, and say $G$ does not induce $H$ otherwise. Analogously, let $\cal H$ be a family of graphs, we say that $G$ is $\cal H$-free if it does not contain any induced subgraph which is isomorphic to a graph in $\cal H$. A {\em hole} of $G$ is an induced cycle of length at least 4 and a {\em $k$-hole} is a hole of length $k$. Let $C_k$ denote the hole on $k$ vertices. $G$ is {\em chordal} if $G$ is hole-free. A $k$-hole is called an {\em odd hole} if $k$ is odd, and is called an {\em even hole} otherwise. An {\em antihole} is the complement of some hole. An odd (resp. even) antihole is defined analogously. It follows from a result of \cite{KKTW01} that it is NP-hard to color an odd hole-free graph. However, it is now known that an odd hole (if it exists) can be detected in polynomial time (\cite{CSS21}).
	
	Let $k$ be a positive integer, and let $[k]=\{1, 2, \ldots, k\}$. A $k$-{\em coloring} of $G$ is a mapping $c: V(G)\mapsto [k]$ such that $c(u)\neq c(v)$ whenever $u\sim v$ in $G$. A maximal complete subgraph of a graph is a clique, and the {\em clique number} $\omega(G)$ of $G$ is the number of vertices in a largest clique of $G$. The {\em chromatic number} $\chi(G)$ of $G$ is the minimum integer $k$ such that $G$ admits a $k$-coloring. A family $\cal G$ of graphs is said to be $\chi$-bounded if there is a function $f$ such that $\chi(G)\le f(\omega(G))$ for every graph $G\in \cal G$, and if such a function $f$ does exist for $\cal G$, then $f$ is said to be a {\em binding function} of $\cal G$. For a graph $G$, we say $G$ is {\em perfectly divisible} if for each induced subgraph $H$ of $G$ with at least one edge, $V(H)$ can be partitioned into $A$ and $B$ such that $H[A]$ is perfect and $\omega(H[B])<\omega(H)$. Obviously, if $G$ is perfectly divisible, then $\chi(G)\le \omega(G)+(\omega(G)-1)+\cdots+2+1= \binom{\omega(G)+1}{2}$. 
	
	A {\em claw} is the graph $K_{1,3}$, a {\em cochair} (resp. {\em dart}) is a graph obtained from the graph $K_1+P_3$ by adding a pendant edge to a vertex of degree 2 (resp. degree 3), a {\em bull} is a graph consisting of a triangle with two disjoint pendant edges, a {\em banner} is a graph consisting of a $C_4$ with one pendant edge. A {\em hammer} is the graph obtained by identifying one vertex of a $K_3$ and one end vertex of a $P_3$. A {\em fork} is a graph obtained from $K_{1,3}$ by subdividing an edge once. An {\em odd balloon} is a graph obtained from an odd hole by identifying respectively two consecutive vertices with two leaves of $K_{1,3}$.
    
	Ho\`{a}ng and McDiarmid \cite{HM02} proved that (odd hole, claw)-free graphs are perfectly divisible. Brandst\"adt and Giakoumakis \cite{BG15} showed that (odd hole, cochair)-free graphs are perfectly divisible. Brandst\"adt and Mosca \cite{BM15} proved that odd hole-free graphs without dart or bull are perfectly divisible. Chudnovsky and Sivaraman \cite{CS19} also proved that (odd hole, bull)-free graphs are perfectly divisible. Wu and Xu \cite{WX24} showed that (odd balloon, fork)-free graphs are perfectly divisible. Ho\`{a}ng \cite{HCT18} showed that (odd hole, banner)-free graphs are perfectly divisible and proposed the following conjecture. 
    
    \begin{conjecture}[\cite{HCT18}]
    Odd hole-free graphs are  perfectly divisible.
	\end{conjecture}
    
	A set $C$ of vertices of $G$ is a {\em clique cutset} if $C$ induces a clique in $G$, and $G-C$ is disconnected.  Dong, Xu and Xu \cite{DXX22} proved that ($P_5, C_5, K_{2,3}$)-free graphs are perfectly divisible. The proof of this result relies on a theorem (A $P_5$-free minimally non-perfectly divisible graph cannot contain a clique cutset) in \cite{H25}.
	In this paper, we prove that
	
	\begin{thm}\label{hammer}
		(odd hole, hammer, $K_{2,3}$)-free graphs are perfectly divisible.
	\end{thm}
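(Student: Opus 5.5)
We argue by minimal counterexample. Let $G$ be an (odd hole, hammer, $K_{2,3}$)-free graph that is not perfectly divisible, chosen with $|V(G)|$ minimum. Every proper induced subgraph of $G$ is perfectly divisible, so it suffices to exhibit a partition $V(G)=A\cup B$ with $G[A]$ perfect and $\omega(G[B])<\omega(G)$. We may assume $G$ is connected. We may also assume $G$ is not perfect; otherwise take $A=V(G)$. By the Strong Perfect Graph Theorem, $G$ then contains an odd antihole. The antihole $\overline{C_5}=C_5$ is excluded, so the antihole has length at least $7$. For length $\ge 7$, an odd antihole $\overline{C_{2k+1}}$ contains $K_{2,3}$ or a hammer as an induced subgraph only under special conditions, so I would first check which odd antiholes survive. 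In $\overline{C_7}$, take $v_1,v_2$, which are nonadjacent. Their common neighbours are $v_4,v_5,v_6,v_7$ minus the neighbours of $v_1,v_2$ in $C_7$, namely $\{v_4,v_5,v_6\}$. In the complement these induce the path $v_4v_5v_6$, which is not independent, so there is no $K_{2,3}$ with this pair. Hammers, however, may appear. The first concrete step is therefore to determine exactly which structures the hypotheses force. If odd antiholes of length $\ge 7$ turn out to contain a hammer, the conclusion would be that $G$ is perfect, which is likely too strong. More plausibly, we need a decomposition.

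The standard tool is neighbourhood partitioning. Pick a vertex $v$, and set $A=M(v)\cup\{v\}$, or alternatively consider $G[N(v)]$ and $G[M(v)]$. Ho\`ang's approach for banner-free graphs shows that if $G[M(v)]$ is perfect for some $v$, then $A=\{v\}\cup M(v)$ is perfect. Indeed, $v$ is isolated in $G[A]$, so $G[A]$ is perfect whenever $G[M(v)]$ is. Moreover, every maximum clique of $G$ that avoids $v$ must be destroyed; this fails in general. Instead, use the usual trick: $A=\{v\}\cup M(v)$ and $B=N(v)$. Then $\omega(G[B])\le\omega(G)-1$ holds automatically, since every clique in $N(v)$ extends by $v$. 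So it suffices to find a vertex $v$ with $G[M(v)]$ perfect. The plan is to prove:

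\emph{Key Claim.} In an (odd hole, hammer, $K_{2,3}$)-free graph, there is a vertex $v$ such that $M(v)$ contains no odd antihole.

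For the Key Claim, suppose instead that for every $v$ the set $M(v)$ contains an odd antihole $Q$, which has length $\ge 7$ and contains triangles. Take a vertex $v$ and such an antihole $Q\subseteq M(v)$. Since $G$ is connected, consider a shortest path from $v$ to $Q$ and the vertex $u\in N(Q)$ closest to $v$. The hammer-freeness gives strong control here. If $u$ has a neighbour $w$ outside $Q\cup N(Q)$ and $u$ is adjacent to two adjacent vertices $a,b\in Q$, then $\{a,b,u\}$ forms a triangle and $w$, together with a further vertex, gives a pendant $P_3$. Hence any vertex of $N(Q)$ that is adjacent to an edge of $Q$ and has a neighbour farther out yields a hammer, unless the adjacency is very restricted. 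Combining this with the $K_{2,3}$-freeness, which says two nonadjacent vertices have no three pairwise nonadjacent common neighbours, I would show that $u$ is adjacent to at most one vertex of $Q$, or to a set inducing a stable set in $Q$. Stable sets in an antihole have size at most $2$. A case check on $\overline{C_{2k+1}}$ then produces a hammer in each case: take a triangle of $Q$ containing the attachment vertex $a$, with $u$ and the next vertex on the path forming the $P_3$. This would contradict the existence of such a $v$ at distance $\ge 2$, so every vertex lies within distance $1$ of every odd antihole.

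Next I would pick $v$ inside a fixed odd antihole $Q$ and show that $M(v)$ is perfect. Any odd antihole $Q'\subseteq M(v)$ must be dominated by $v$, by the previous paragraph applied to $Q'$, which contradicts $Q'\subseteq M(v)$. Hence $G[M(v)]$ is odd-antihole-free, and it is odd-hole-free by hypothesis, so it is perfect by the Strong Perfect Graph Theorem. This finishes the proof with $A=\{v\}\cup M(v)$ and $B=N(v)$.

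The main obstacle is the distance argument. I expect difficulty in handling vertices of $N(Q)$ whose neighbourhoods in $Q$ are large; here the $K_{2,3}$ condition must be used carefully. There is also the degenerate possibility that the path from $v$ reaches $Q$ with length exactly $2$. If the argument above only rules out distances $\ge 3$, I would fall back to the following. Choose $v$ to be a vertex of an odd antihole and show, using hammer-freeness applied to triangles inside $Q$, that any vertex at distance $2$ from $v$ that lies in another odd antihole forces a hammer. I would try this first with $\overline{C_7}$ and then generalize.
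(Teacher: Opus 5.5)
Your overall frame is right and matches the paper's: it suffices to find a vertex $v$ with $G[M(v)]$ perfect and to split $V(G)$ into $\{v\}\cup M(v)$ and $N(v)$. Incidentally, odd antiholes on at least $7$ vertices are automatically hammer-free and $K_{2,3}$-free, since each of their vertices has only two non-neighbours.

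The gap is in your proof of the Key Claim. You assert that a vertex $u\in N(Q)$ with a neighbour anticomplete to $Q$ sees at most one vertex of $Q$, or a stable set of $Q$. This is backwards. Hammer-freeness, applied to triangles of $Q$ together with that anticomplete neighbour, forces $u$ to be adjacent to \emph{all but at most two} vertices of $Q$, and the missed vertices are non-adjacent. This situation genuinely occurs.

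Let $Q_1,Q_2$ be disjoint, mutually anticomplete copies of $\overline{C_7}$, and add a vertex $u$ complete to both. Since $u$ is universal, it lies in no hole, hammer or $K_{2,3}$, and $Q_1\cup Q_2$ contains none of these. So the graph is connected and in the class. Yet:
\begin{itemize}
\item every vertex of $Q_1$ is at distance $2$ from $Q_2$, reached through $u$, which is complete to $Q_2$;
\item every odd antihole lies in $Q_1$ or $Q_2$;
\item for $v$ in either copy, $M(v)$ contains the other copy.
\end{itemize}
So ``every vertex is within distance $1$ of every odd antihole'' is false. Choosing $v$ inside an odd antihole fails outright, not merely its justification. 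Your Key Claim also needs connectivity: two disjoint copies of $\overline{C_7}$ refute it as stated.

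What is missing is an extremal choice of $v$: take $v$ of maximum degree. In the example this is $u$, with $M(u)=\emptyset$. Suppose $M(v)$ contained an odd antihole $X_0$. Let $A$ be the set of vertices anticomplete to $X_0$ (so $v\in A$), and let $B=N(A)$. The argument then runs as follows.
\begin{itemize}
\item Hammer-freeness gives the correct attachment statement above for every vertex of $B$.
\item $K_{2,3}$-freeness then makes $N_B(a)$ a clique for each $a\in A$. Indeed, two non-adjacent $u,u'\in N_B(a)$ would have common neighbours $x,y\in X_0$ with $x\not\sim y$, and $\{u,u'\}$ against $\{a,x,y\}$ is a $K_{2,3}$.
\item Hammer-freeness again shows that each component of $G[A]$ is complete to its neighbourhood in $B$.
\end{itemize}
Hence, for the component $C'$ of $G[A]$ containing $v$, connectivity gives $N_B(v)=N_B(C')\neq\emptyset$. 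Any $v'\in N_B(v)$ satisfies $N(v)\setminus\{v'\}\subseteq N(v')\setminus\{v\}$, and $v'$ additionally has neighbours in $X_0$. So $d(v')>d(v)$, contradicting the choice of $v$.
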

	
	Say $G$ is $\emph{short-holed}$ if every hole in $G$ has length four. In section 3, we focus on short-holed graphs.  This class of graphs is interesting because it looks simpler than the class of odd hole-free graphs, but in reality it is just as challenging as the class of odd hole-free graphs in most cases because the short induced cycles play a very important role. We may assume $\omega=\omega(G)\ge2$ since $\chi(G)\le1$ if $\omega(G)\le1$. Sivaraman \cite{SV18} proved that

\begin{thm}[\cite{SV18}]\label{T-4-1}
Let $G$ be a $short$-$holed$ graph. Then $\chi(G)\le 2^{2^{\omega}}$.
\end{thm}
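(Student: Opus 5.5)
The plan is to induct on $\omega=\omega(G)$, aiming for a recursion of the form $f(\omega)\le 2f(\omega-1)^2$, where $f(\omega)$ denotes the maximum chromatic number of a short-holed graph with clique number at most $\omega$. The base cases are $f(1)=1$ and $f(2)\le 2$; the latter holds because a triangle-free short-holed graph has no odd cycle at all. To see this, note that a shortest odd cycle in a graph is induced, so in a triangle-free graph it would be an odd hole of length at least $5$. Iterating $f(\omega)\le 2f(\omega-1)^2$ from these base cases gives a doubly exponential bound below $2^{2^{\omega}}$. We may assume $G$ is connected. Being short-holed is hereditary, and for every vertex $v$ we have $\omega(G[N(v)])\le\omega-1$, so $\chi(G[N(v)])\le f(\omega-1)$ by induction.

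Fix a vertex $v$ and let $L_i$ be the set of vertices at distance exactly $i$ from $v$. Edges of $G$ join only vertices in the same layer or in consecutive layers. Hence if we colour all $L_i$ with $i$ even from one palette, and all $L_i$ with $i$ odd from a disjoint palette, we get $\chi(G)\le 2\max_i\chi(G[L_i])$. Within each palette a single set of $\max_i \chi(G[L_i])$ colours can be reused across layers of the same parity. We have $L_0=\{v\}$ and $L_1=N(v)$, and $\chi(G[L_1])\le f(\omega-1)$. So everything reduces to proving $\chi(G[L_i])\le f(\omega-1)^2$ for $i\ge 2$, and it suffices to do this for each component $C$ of $G[L_i]$.

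For a component $C$ of $G[L_i]$ with $i\ge 2$, the plan is to exploit the short-holed hypothesis through the parents of $C$ in $L_{i-1}$. Take two vertices $p,q\in L_{i-1}$ with neighbours in $C$. Then $p$ and $q$ can be joined by an induced path through $L_0\cup\cdots\cup L_{i-2}$, and also by an induced path through $C$. Gluing these gives a cycle of length at least $5$ unless chords appear, and these chords are forced since no hole of length $\geq5$ exists. The target structural claim is the following: there is a vertex $u\in L_{i-1}$ such that every vertex of $C$ is either adjacent to $u$ or adjacent to some vertex of $C\cap N(u)$ that dominates it in a controlled way. This would give a partition of $C$ into $C\cap N(u)$, which has $\chi\le f(\omega-1)$, and the rest, which is covered by neighbourhoods. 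Combining the two colourings as a product colouring yields $\chi(G[C])\le f(\omega-1)^2$. A natural intermediate lemma is that for adjacent $x,y\in C$, the parent sets $N(x)\cap L_{i-1}$ and $N(y)\cap L_{i-1}$ are comparable, or at least that their symmetric difference is complete to the intersection. Such a lemma would allow choosing $u$ with a maximal neighbourhood in $C$ and propagating along $C$.

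The main obstacle is precisely this structural step. Unlike in $C_4$-free settings, the configuration $p\,x\,y\,q$ with $p\sim q$ is a legal $4$-hole, so the parent sets need not be nested. The argument must then distinguish the cases $p\sim q$ and $p\not\sim q$. In the case $p\sim q$ one must use a common lower neighbour to create a long hole through $p$ or $q$. If a single dominating parent cannot be found, the fallback plan is to weaken the claim to the following: $C$ is covered by $N(u)$ together with the second neighbourhood inside $C$ of a clique of parents, and each piece is coloured recursively. Bounding the number of pieces by $f(\omega-1)$ would still give the product bound $f(\omega-1)^2$ needed for the recursion.
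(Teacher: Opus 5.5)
Your outer framework is fine: induction on $\omega$, the bipartite base case for $\omega=2$, a BFS levelling with one palette for even levels and one for odd levels, and $\chi(G[L_1])\le f(\omega-1)$. But the proof stops exactly where the real work is, and what you propose there would not close it.

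The ``target structural claim'' for a component $C$ of a level is never proved. Even if it held, it would not bound $\chi(G[C])$. Knowing that each vertex of $C\setminus N(u)$ has a neighbour in $C\cap N(u)$ only writes $C\setminus N(u)$ as a union of neighbourhoods of possibly unboundedly many vertices, so no colouring bound follows. A product bound needs a partition of $C$ into a \emph{bounded} number (say $O(f(\omega-1))$) of pieces, each of clique number less than $\omega$. Nothing in the proposal produces such a partition. The intermediate lemma you suggest (nested parent sets for adjacent vertices) is false in short-holed graphs, as you observe yourself, and the fallback is only a hope.

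The missing idea is to colour the previous level first and then sort the current level by the colours of parents. This is Sivaraman's argument; the paper cites it and reproduces its method in the proof of Theorem~\ref{short hole-1}, with Lemma~\ref{L-4-2} replaced by the induction hypothesis.

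First, colour the previous level. Assume $L_k$ is connected. Shrink the lower levels, working down from $L_{k-1}$, so that every remaining vertex of $L_j$ ($j<k$) is the only parent of some vertex of $L_{j+1}$. For $k\ge 3$, take $x\in L_{k-2}$ and a child $y$ whose only parent is $x$. Every $z\in L_{k-1}\setminus(N(y)\cup\{y\})$ is adjacent to $x$. Otherwise a parent $a\neq x$ of $z$, a shortest $a$--$x$ path through lower levels, and a shortest $y$--$z$ path through $L_k$ together form a hole of length at least $5$. Hence $\chi(G[L_{k-1}])\le 2f(\omega-1)$; for $k=2$ this is immediate.

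Second, put a vertex of $L_k$ in $A_i$ if $i$ is the smallest colour among its parents. Distinct parents of the same colour are non-adjacent. This is exactly what removes your obstacle, the legal $4$-hole formed by two adjacent vertices of $L_k$ and two adjacent parents.

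By induction on $t$, every clique $\{c_1,\dots,c_t\}\subseteq A_i$ has a common parent of colour $i$. Suppose instead that a parent $p$ of colour $i$ is complete to $\{c_1,\dots,c_{t-1}\}$ but misses $c_t$, and a parent $q$ of colour $i$ is complete to $\{c_2,\dots,c_t\}$ but misses $c_1$. Then $c_1$, $c_t$, $q$, a shortest $q$--$p$ path through lower levels, and $p$ form a hole of length at least $5$.

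So $\omega(G[A_i])\le\omega-1$, whence $\chi(G[L_k])\le 2f(\omega-1)^2$ and $f(\omega)\le 4f(\omega-1)^2$. This is weaker than the recursion you aimed for, but it suffices: $4f(\omega)\le (4f(\omega-1))^2$ together with $4f(1)=4=2^{2^1}$ gives $f(\omega)\le 2^{2^{\omega}-2}$.
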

	
	This result is not much better than $\chi(G)\le 2^{2^{\omega+2}}$, which is Scott and Seymour's result about odd hole-free graphs \cite{SS16}, but the proof is much simpler. Scott and Seymour \cite{SS20} also pay attention to the short-holed graph and (Scott and Seymour communicated privately with Vaidy Sivaraman) they were able to prove that $\chi(G)\le 10^{20}2^{\omega^2}$ if $G$ is a short-holed graph. However, this upper bound is still far from optimal, Scott and Seymour \cite{SS20} believe maybe $\chi(G)\le \omega^2$ for all short-holed graphs.

	A levelling $L$ is a sequence of disjoint subsets of vertices $(L_0,L_1,\ldots,L_k)$ such that $|L_0|=1$ and every vertex in $L_i$ has a neighbor in $L_{i-1}$, and no vertex in $L_i$ has a neighbor in $L_j$ for $j<i-1$. We say that a vertex $u \in L_{i-1}$ is a {\em parent} of a vertex $v \in L_i$ and $v$ is a {\em child} of $u$, if there is an edge between $u$ and $v$. 
	By using the method of Sivaraman in \cite{SV18}, which adapts the leveling argument and several ideas from Scott and Seymour \cite{SS16}, we prove that
	
	\begin{thm}\label{short hole-1}
		Let $G$ be a short-holed and $(K_1+C_4)$-free graph. Then $\chi(G)\le 4\omega(\omega-1)$.
	\end{thm}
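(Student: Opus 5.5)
We would argue by induction on $\omega$, using a levelling as in Sivaraman's argument. The case $\omega\le 1$ is trivial. For $\omega=2$ we would show directly that $G$ is bipartite: a short-holed graph has no odd hole, and a triangle-free graph with no odd hole and no triangle has no odd cycle at all, so $\chi(G)\le 2\le 4\cdot 2\cdot 1$.

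For the induction step we may assume $G$ is connected. Fix a vertex $v$ and take the BFS levelling $L_0=\{v\},L_1=N(v),L_2,\ldots,L_k$. Since consecutive pairs of levels can be colored with disjoint palettes by alternating parity, it suffices to show $\chi(G[L_i])\le 2\omega(\omega-1)$ for every $i$. Then $\chi(G)\le 2\max_i\chi(G[L_i])\le 4\omega(\omega-1)$.

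\textbf{The first level.} Here $G[L_1]=G[N(v)]$. Since $G$ is $(K_1+C_4)$-free, $G[N(v)]$ has no $C_4$. Being short-holed, it also has no longer holes, so it is chordal and hence perfect. Moreover $\omega(G[N(v)])\le\omega-1$, so $\chi(G[L_1])\le\omega-1$. The same argument gives the key local fact we will reuse: for every vertex $u$, $G[N(u)]$ is chordal with clique number at most $\omega-1$.

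\textbf{Deeper levels.} For $i\ge 2$, let $C$ be a component of $G[L_i]$ and consider its parents in $L_{i-1}$. Following Sivaraman and Scott--Seymour, we would choose a parent $u\in L_{i-1}$ and split $C$ into $C\cap N(u)$, which is chordal with clique number at most $\omega-1$, and the remainder. The aim is to prove that every vertex of $C$ is adjacent to one of at most $2\omega$ (or $\omega$) chosen parents forming a clique or near-clique in $L_{i-1}$. Then $C$ is covered by that many neighborhoods, each perfect with clique number at most $\omega-1$. This gives $\chi(C)\le 2\omega(\omega-1)$.

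The structural input for this covering claim is as follows. If $x,y\in C$ are adjacent with distinct parents $a\not\sim y$ and $b\not\sim x$, then either $a\sim b$, or $a,x,y,b$ together with a path back through the earlier levels forms a hole. Since holes have length $4$, they must be connected through a common grandparent, yielding a $C_4$ such as $a x y b$ with a common neighbor. That common neighbor is a $K_1+C_4$, which is forbidden. So parents of adjacent vertices in $C$ are adjacent or nested, and propagating along $C$, the minimal parents form a clique in $L_{i-1}$, of size at most $\omega$.

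\textbf{Main obstacle.} The hard step is this propagation and covering argument: showing that a bounded number of parents, controlled in terms of $\omega$, dominate a whole component of $L_i$. The delicate case is when two parents are nonadjacent and their connection back to $L_0$ has length greater than $2$. Here I would first try the standard trick of choosing, for each vertex, a parent whose neighborhood in $C$ is inclusion-maximal, and then showing that such maximal parents pairwise have comparable neighborhoods on $C$ or are adjacent. If only a clique of parents arises we in fact get the bound $\omega(\omega-1)$; the slack factor $2$ is meant to absorb a second layer of parents needed in the degenerate case.
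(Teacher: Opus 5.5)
Your reduction to proving $\chi(G[L_i])\le 2\omega(\omega-1)$ for each level is fine, and so are the parity trick and the treatment of $L_1$; all three match the paper. The induction on $\omega$ is never used.

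The gap is the covering claim for $i\ge 2$. It is not merely unproven but false: a connected component of $L_i$ need not be dominated by any bounded number of parents. Take $L_0=\{g\}$, $L_1=\{a_1,\dots,a_m\}$ independent, and $L_2=\{c,p_1,\dots,p_m\}$. Make $c$ adjacent to every $a_j$ and every $p_j$, and make $p_j$ adjacent only to $a_j$ and $c$.
\begin{itemize}
\item Each $p_j$ is simplicial, and deleting them leaves $K_{2,m}$, so every hole is a $4$-hole $g\,a_j\,c\,a_l$.
\item Every neighbourhood is independent, a matching, or has at most three vertices, so the graph is $(K_1+C_4)$-free.
\item $\omega=3$ and $L_2$ is connected.
\end{itemize}
Yet $a_j$ is the only parent of $p_j$, so all $m$ parents are needed to cover $L_2$. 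The parents are pairwise nonadjacent, and $N_{L_2}(a_1)=\{c,p_1\}$ and $N_{L_2}(a_2)=\{c,p_2\}$ are incomparable. So neither ``minimal parents form a clique'' nor your inclusion-maximal fallback can hold.

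Your local argument is also off. If $a\not\sim b$, the cycle $a\,x\,y\,b$ closed through the earlier levels already has length at least $5$, and that is the contradiction. If $a\sim b$, a common grandparent of the $4$-hole $a\,x\,y\,b$ is nonadjacent to $x$ and $y$, so no $K_1+C_4$ arises. The valid conclusion (either $a\sim b$, or one of them is a parent of both) is local and does not propagate, as the path $p_1\,c\,p_2$ shows.

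The missing idea is to bound the number of \emph{colour classes} of parents rather than the number of parents. The paper proceeds as follows.
\begin{itemize}
\item It prunes the levelling so that each vertex of $L_{k-1}$ is the unique parent of some vertex of the (connected) $L_k$.
\item It fixes $x\in L_{k-2}$ and a child $y$ whose only parent is $x$. A hole of length at least $5$ through $L_k$ and the earlier levels shows that $x$ is complete to $L_{k-1}\setminus N[y]$. Hence $L_{k-1}\subseteq N(x)\cup N(y)$ and $\chi(L_{k-1})\le 2(\omega-1)$. A neighbourhood cover does work at this step, but it uses a vertex $y$ of the same level.
\item It splits $L_k$ into classes $A_i$ according to the smallest colour of a parent.
\end{itemize}
Parents of the same colour are pairwise nonadjacent, and this is exactly where your hole argument works. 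It shows that every clique of $A_i$ has a common parent of colour $i$. Together with $(K_1+C_4)$-freeness, this excludes $\overline{C_7}$ from $G[A_i]$. Longer odd antiholes contain $K_1+C_4$, and odd holes are excluded. So $G[A_i]$ is perfect, and $\chi(L_k)\le 2(\omega-1)\omega$. In the example above, $L_1$ is a single colour class and $L_2$ is one perfect class, even though covering it needs all $m$ parents.
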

	
	Also, we have the following results.
	
	\begin{thm}\label{short hole-2}
		Let $G$ be a short-holed and $(K_1\cup K_3)$-free graph. Then $\chi(G)\le 2\omega-1$.
	\end{thm}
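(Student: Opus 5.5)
Since $G$ is $(K_1\cup K_3)$-free, every triangle $T$ of $G$ dominates: each vertex outside $T$ has a neighbour in $T$. In particular, for every vertex $v$ the set $M(v)$ is triangle-free. So $G[M(v)]$ is a triangle-free short-holed graph. A short-holed graph has no odd holes, and a triangle-free graph has no antiholes of length at least $5$ (these contain triangles). Hence $G[M(v)]$ is perfect with clique number at most $2$, that is, bipartite, and $\chi(G[M(v)])\le 2$.

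The plan is induction on $\omega$ together with a suitable choice of $v$. If $\omega\le 2$, then $G$ is triangle-free short-holed, hence bipartite as above, and $\chi(G)\le 2\le 2\omega-1$ when $\omega\geq 2$. For $\omega\ge 3$, pick any vertex $v$ and split $V(G)=N(v)\cup\{v\}\cup M(v)$. Note $\omega(G[N(v)])\le \omega-1$, and $G[N(v)]$ is again short-holed and $(K_1\cup K_3)$-free, since both properties are hereditary. By induction $\chi(G[N(v)])\le 2(\omega-1)-1=2\omega-3$. We colour $N(v)$ with $2\omega-3$ colours and $M(v)$ with $2$ new colours. The vertex $v$ is anticomplete to $M(v)$, so it can reuse one of the $M(v)$ colours. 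This gives $\chi(G)\le 2\omega-1$. The base case also needs care when $\omega(G[N(v)])$ is small: if $\omega-1=1$, the set $N(v)$ is stable and needs only $1=2\cdot1-1$ colour, so the formula $\chi\le 2\omega'-1$ holds for all $\omega'\ge1$. I will therefore run the induction from $\omega=1$ with bound $2\omega-1$, where $\omega=1$ gives $\chi=1$.

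The key steps, in order, are as follows. First, prove that a triangle-free short-holed graph is bipartite: it has no odd cycle at all, since a shortest odd cycle is induced, it is not a triangle, and it is not an odd hole. This argument is direct and avoids the Strong Perfect Graph Theorem. Second, note that $(K_1\cup K_3)$-freeness gives that $M(v)$ is triangle-free. Third, carry out the induction with the colour reuse for $v$.

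The main point to double-check is that the induction bound is consistent. This amounts to $(2\omega-3)+2=2\omega-1$, with $v$ absorbed into the colours of $M(v)$. This is where the argument could fail if $M(v)$ were empty and $v$ needed a fresh colour. In that case, however, $v$ is universal, and $N(v)\cup\{v\}$ needs at most $(2\omega-3)+1\le 2\omega-1$ colours, so the bound still holds. I expect no real obstacle beyond verifying the bipartiteness claim carefully.
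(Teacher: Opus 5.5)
Your proposal is correct and is essentially the paper's own proof. $(K_1\cup K_3)$-freeness makes $M(v)$ triangle-free, hence bipartite, with your shortest-odd-cycle argument as the cleanest justification; $v$ joins one side of that bipartition, and induction on $\omega$ colours $N(v)$ with $2\omega-3$ colours. One small slip: your parenthetical claim that antiholes of length at least $5$ contain triangles fails for $\overline{C_5}=C_5$, but that case is already excluded as an odd hole, so nothing breaks.
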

	
	\begin{thm}\label{short hole-3}
		Let $G$ be a short-holed and $(K_1+(K_1\cup K_3))$-free graph. Then $\chi(G)\le 16\omega-24$.
	\end{thm}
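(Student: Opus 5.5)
The plan is to combine a local bound on neighbourhoods with a levelling argument, in the spirit of Sivaraman \cite{SV18}. The target $16\omega-24=8(2\omega-3)$ suggests the structure: a bound of $2\omega-3$ on a neighbourhood, multiplied by $2$ for the parity of levels and by a further factor $4$ inside a single level.

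\textbf{Step 1 (neighbourhoods).} Let $G$ be short-holed and $(K_1+(K_1\cup K_3))$-free, and fix any vertex $u$. If $G[N(u)]$ contained an induced $K_1\cup K_3$, then adding $u$ would give an induced $K_1+(K_1\cup K_3)$. Hence $G[N(u)]$ is $(K_1\cup K_3)$-free. It is also short-holed, being an induced subgraph, and it has clique number at most $\omega-1$. Theorem~\ref{short hole-2} therefore gives $\chi(G[N(u)])\le 2(\omega-1)-1=2\omega-3$ for every vertex $u$. We may assume $\omega\ge 2$ and that $G$ is connected.

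\textbf{Step 2 (levelling).} Fix a vertex $v$ and let $L_i$ be the set of vertices at distance $i$ from $v$. Then $(L_0,L_1,\ldots)$ is a levelling, and there are no edges between $L_i$ and $L_j$ for $|i-j|\ge 2$. Consequently we can colour all even levels with one palette and all odd levels with a disjoint palette, which gives $\chi(G)\le 2\max_i\chi(G[L_i])$. For $i\le 1$ we have $\chi(G[L_i])\le 2\omega-3$ by Step~1. It therefore suffices to prove
\[
\chi(G[L_i])\le 4(2\omega-3)\qquad\text{for every } i\ge 2 .
\]

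\textbf{Step 3 (one level).} This is the main obstacle. Fix $i\ge 2$ and a component $C$ of $G[L_i]$. Following \cite{SV18} and \cite{SS16}, the plan is to choose a parent $x\in L_{i-1}$ of $C$ and look at the set of vertices of $C$ that are children of $x$. That set lies inside $N(x)$, so it is $(2\omega-3)$-colourable by Step~1. For the remaining vertices of $C$, I would exploit the absence of holes of length at least $5$. Take $y\in C$ that is not adjacent to $x$, a shortest path from $y$ back into $N(x)\cap C$, and shortest paths up through the levels towards $v$. These combine into a cycle, and if it is long it must have chords. The chords should force $y$ to be adjacent to a parent of a neighbour of $x$, or to lie within distance two of $x$ inside $C$.

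From this I would try to prove that $C$ can be covered by at most four sets, each contained in the neighbourhood of a single vertex of $G$. Concretely, the covering should use $x$, one ``second'' parent, and the analogous pair of vertices obtained by repeating the argument on the part of $C$ not yet covered. Step~1 then colours each of these four sets with $2\omega-3$ colours, giving $\chi(G[C])\le 4(2\omega-3)$.

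The delicate part is proving that such a four-vertex cover exists. This will need a case analysis on how the induced $C_4$'s created by chords sit between $L_{i-1}$ and $L_i$, and it may require the $(K_1+(K_1\cup K_3))$-freeness a second time, applied at vertices of $L_{i-1}$. If this exact covering fails, my fallback is a weaker but still linear statement: split $C$ by distance parity from $N(x)\cap C$ and apply Step~1 on each part. Since distinct components of $G[L_i]$ can reuse the same colours, the bound for $C$ extends to all of $G[L_i]$, and combined with Step~2 this gives $\chi(G)\le 2\cdot 4(2\omega-3)=16\omega-24$.
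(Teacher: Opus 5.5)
Your Steps 1 and 2 are correct and match the paper. Step 3, however, carries the whole proof and is not established, and the statement you aim for there is false. A connected level need not be covered by four neighbourhoods, or by any bounded number, even under all the hypotheses.

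Let $G$ have vertices $r,z,p_j,a_j,b_j$ ($1\le j\le m$) and edges $rp_j$, $zp_j$, $za_j$, $p_ja_j$, $p_jb_j$, $a_jb_j$.
\begin{itemize}
\item Every neighbourhood is triangle-free, so $\omega=3$ and $G$ is $(K_1+(K_1\cup K_3))$-free.
\item Each $b_j$ is simplicial, and $N(a_j)\setminus\{b_j\}=\{z,p_j\}$ is a clique. So no hole meets any $a_j$ or $b_j$, all holes lie in $G[\{r,z,p_1,\dots,p_m\}]\cong K_{2,m}$, and $G$ is short-holed.
\item The levelling from $r$ gives $L_1=\{p_1,\dots,p_m\}$ and a connected $L_2=\{z\}\cup\{a_j,b_j : j\le m\}$, a spider centred at $z$.
\item The sets $N(b_j)=\{a_j,p_j\}$ are pairwise disjoint, so no vertex is adjacent to two of the $b_j$. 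Any cover of $L_2$ by (open or closed) neighbourhoods therefore needs at least $m$ sets.
\end{itemize}
Two copies of this graph joined by the edge $r^{(1)}r^{(2)}$ defeat every choice of root. Your fallback does not rescue the argument either: the parity classes of distance from $N(x)\cap C$ are not contained in neighbourhoods, so Step 1 cannot be applied to them.

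The missing idea is to put the neighbourhood cover one level up. The paper's factor $4=2\cdot 2$ comes from covering the parent level by two neighbourhoods, then gaining a factor $2$ inside each colour class. That second factor is where the forbidden subgraph is used again at $L_{k-1}$, as you anticipated, but for a different purpose.
\begin{enumerate}
\item Restrict to one component of $L_k$. Prune earlier levels so that every remaining vertex of $L_i$ ($i<k$) is the unique parent of some vertex of $L_{i+1}$.
\item For $k\ge 3$, choose $x\in L_{k-2}$ and a child $y$ whose only parent is $x$. Suppose $z\in L_{k-1}\setminus N[y]$ were not adjacent to $x$. Then going up from a parent of $z$ to $x$, across $xy$, and back to $z$ through the connected $L_k$ gives a hole of length at least $5$. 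Hence $L_{k-1}\subseteq N(x)\cup N(y)$, and Step 1 colours $L_{k-1}$ with $2(2\omega-3)$ colours. For $k=2$, $L_1$ is a single neighbourhood.
\item Put a vertex of $L_k$ into $A_i$ if $i$ is the least colour among its parents.
\item By the hole argument of Claim~\ref{clm-1-1}, any triangle in $A_i$ has a common parent $w$ of colour $i$. Then $w$, a parent of $w$ in $L_{k-2}$, and the triangle induce $K_1+(K_1\cup K_3)$.
\item So each $A_i$ is triangle-free and odd-hole-free, hence bipartite. This gives $\chi(G[L_k])\le 2\cdot 2(2\omega-3)=8\omega-12$.
\end{enumerate}
In the example above this is exactly what happens: $L_1$ is a single colour class and the spider $L_2$ is bipartite.
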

	
	This paper is organized as follows. In section 2, we prove Theorem~\ref{hammer}. Section 3 is devoted to the proofs of Theorems~\ref{short hole-1},\ref{short hole-2} and \ref{short hole-3}.
	
	\section{Perfect divisibility of $($odd hole, hammer, $K_{2,3}$$)$-free graphs }
	The aim of this section is to prove Theorem~\ref{hammer}. 
	For an induced subgraph $H$ of a graph $G$, a vertex $v\in$ $V(G)\setminus V(H)$ that is anticomplete to $V(H)$ is called an {\em anticenter} for $H$.
	A graph $G$ is {\em minimally non-perfectly divisible} if $G$ is not perfectly divisible but each of its proper induced subgraph (with at least one edge) is. 
	
	\begin{proof}[Proof of Theorem~\ref{hammer}] Suppose to the contrary, let $G$ be an $($odd hole, hammer, $K_{2,3}$$)$-free minimally non-perfectly divisible graph. It is clear that $G$ is connected. Let $v$ be a vertex of maximum degree. $G[M(v)]=G[V(G)\setminus N[v]]$ is not perfect. (Otherwise $G$ is perfectly divisible). By the Strong Perfect Graph Theorem\cite{CRST06}, $M(v)$ contains a set $X_0$ such that $G[X_0]$ induces an odd antihole, and $v$ is anticomplete to $X_0$. Let $v_1,v_2,\ldots,v_n$($n\ge7$, $n$ is odd) be the vertices of $X_0$, with edges $v_iv_j$ whenever $|i-j|\ne 1$ (indices are modulo $n$). Let $A$ be the set of anticenters for $G[X_0]$. Since $v \in A$, $A\neq \phi$. We know each vertex of $N(A)$ has a neighbor in $X_0$. If $N(A)=\phi$, then $G=G[A]\cup G[X_0]$ is not connected, a contradiction. Thus, assume that $N(A)\neq \phi$.

	We are going to prove 
	\medskip
		
	(1) Each vertex of $N(A)$ has at most two nonneighbors in $X_0$. In particular, if some vertex of $N(A)$ has exactly two nonneighbors in $X_0$, the two nonneighbors are not adjacent in $G[X_0]$.
		
	\medskip
		
	Suppose there exists a vertex $u$ of $N(A)$ which has three nonneighbors in $X_0$. Since each vertex of $N(A)$ has a neighbor in $X_0$, $u$ has a neighbor in $X_0$, say $v_1$. 
	
	Case 1. Suppose $u$ has no neighbors in $\{v_2,v_n\}$. First, assume $u\not\sim v_i$ for some $i\in\{4,\ldots,n-2\}$. Then $\{u,v_1,v_i,v_2,v_n\}$ induces a hammer, a contradiction. So $u\sim v_i$, for all $i\in\{4,\ldots,n-2\}$. 
	Thus $u\not\sim v_3$ or $u\not\sim v_{n-1}$. By symmetry, we assume $u\not\sim v_3$. But now, $\{v,u,v_5,v_3,v_n\}$ induces a hammer, a contradiction. So $u\sim v_3$ and $u\sim v_{n-1}$. Thus, if $u\sim v_1$, then $u\sim v_2$ or $u\sim v_n$, or both.
		
	Case 2. If $u\sim v_2$ and $u\sim v_n$, then $u$ has three nonneighbors in $X_0\setminus\{v_1,v_2,v_n\}$. It is clear that there exist two adjacent vertices in the three nonneighbors, say $v_j,v_{j'}$. Since $v_1$ is complete to $X_0\setminus\{v_1,v_2,v_n\}$, $\{v,u,v_1,v_j,v_{j'}\}$ induces a hammer, a contradiction. So, $u$ has exactly one neighbor in $\{v_2,v_n\}$. 
	
	Case 3. By symmetry, we may assume $u\sim v_2$ and $u\not\sim v_n$. If there exists $v_t,t\in\{4,5,\ldots,n-2\}$ such that $u\not\sim v_t$, then $\{v,u,v_2,v_n,v_t\}$ induces a hammer, a contradiction. Therefore, $u$ is complete to $X_0\setminus\{v_3,v_{n-1},v_n\}$. Since
	$u$ has three nonneighbors in $X_0$, $u$ is anticomplete to $\{v_3,v_{n-1},v_n\}$. But now, $\{v,u,v_1,v_3,v_{n-1}\}$ induces a hammer, a contradiction.
		
	So, each vertex of $N(A)$ has at most two nonneighbors in $X_0$. Suppose $u'\in N(A)$ has exactly two nonneighbors $a,b$ in $X_0$ and $a\sim b$ in $G[X_0]$. Then there exists $v_{t'}\in X_0$ such that $G[\{a,b,v_{t'}\}]$ is a triangle. Since $u\sim v_{t'}$, $\{v,u,a,b,v_{t'}\}$ induces a hammer, a contradiction. This proves (1). 
		
	We are going to prove	
	\medskip
		
	(2) Let $N(A)=B$, then $N_B(a)$ is a clique, for each vertex $a\in A$.
		
	\medskip
		
	Suppose there exist two nonadjacent vertices $u,u'$ in $N_B(a)$, $a\in A$. By (1), there exist $v_j,v_{j+1} \in X_0$ such that $u$ and $u'$ are complete to $\{v_j,v_{j+1}\}$. But now, $\{a,u,u',v_j,v_{j+1}\}$ induces a $K_{2,3}$, a contradiction. This proves (2).
		
	\medskip
		
   We are going to prove	
   \medskip

   (3) For every component $C$ of $A$, $C$ is complete to $N_B(C)$.
   
   	\medskip
   	
	Since $G$ is connected and $V(G) =
	A\cup B\cup X_0$, $N_B(C) \neq \emptyset$.
	If $|C| = 1$, the property is immediate. Now we consider the case $|C|\geq 2$. 
	
	Let $a \in C$ and let $u \in N_C(a)$.
	We first claim that $u$ is complete to $N_B(a)$. Suppose, for contradiction, that there exists a vertex
	$w \in N_B(a)$ that is not adjacent to $u$. By (1), we can choose vertices $v_i, v_j$ from $X_0$ such that
	$\{w, v_i, v_j\}$ induces a triangle. This implies that$\{u, a, w, v_i, v_j\}$ induces a hammer, a contradiction.
	Hence, $u$ is complete to $N_B(a)$. Similarly, by interchanging the roles of $a$ and $u$ in the above
	argument, we deduce that $a$ is complete to $N_B(u)$. Repeating this argument, we conclude that
	every vertex of $C$ has the same neighborhood in $B$. Hence, $C$ is complete to $N_B(C)$. This proves (3).
	
	Recall that $v$ is a vertex of maximum degree. Let $C'$ be the component of $A$ containing $v$. By (3), we deduce that
	$N_B(v) = N_B(C') \neq \emptyset$.
	Let $v' \in N_B(v)$. By properties (1), (2), and (3), we obtain $N_G(v) \backslash \{v'\} \subsetneq N_G(v') \backslash \{v\}$, and hence $d(v) < d(v')$, contradicting the choice of $v$ as a vertex of maximum degree. This completes the proof of Theorem~\ref{hammer}.
	\end{proof}

	\section{The chromatic number of short-holed graphs}
	
	If a graph $G$ is chordal, then $G$ contains no odd hole and no odd antihole. By the Strong Perfect Graph Theorem \cite{CRST06}, every chordal graph is perfect. 	
	%\pf It is clear that $G$ is chordal, $\chi(G)=\omega$. \qed
	
	\begin{lemma}\label{L-4-2}
		Let $G$ be a short-holed and $(K_1+C_4)$-free graph. Let $S\subseteq V(G)$. If there exists a vertex $v$ in $G$ which is complete to $S$, then $\chi(G[S])\le\omega-1$.
	\end{lemma}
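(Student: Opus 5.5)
We will show that $G[S]$ is chordal. Since chordal graphs are perfect, as noted at the start of this section, it follows that $\chi(G[S])=\omega(G[S])$. It then remains only to bound $\omega(G[S])$, and the vertex $v$ complete to $S$ supplies that bound.

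First, $G[S]$ is hole-free. Suppose $G[S]$ contained a hole $H$. Since $G$ is short-holed, every hole of $G$ has length four, so $H$ is an induced $C_4$ of $G$. The vertex $v$ lies outside $S$: it cannot lie in $S$, as it would then have to be adjacent to itself. Since $v$ is complete to $S$, it is adjacent to all four vertices of $H$. Hence $V(H)\cup\{v\}$ induces $K_1+C_4$, contradicting that $G$ is $(K_1+C_4)$-free. Therefore $G[S]$ contains no hole, so it is chordal and hence perfect, and $\chi(G[S])=\omega(G[S])$.

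Second, the clique bound. Let $K$ be a clique of $G[S]$. Because $v$ is complete to $S$ and $v\notin S$, the set $K\cup\{v\}$ is a clique of $G$. Hence $|K|+1\le\omega$, so $\omega(G[S])\le\omega-1$. Combining the two steps gives $\chi(G[S])\le\omega-1$.

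There is no real obstacle here. The one point that needs care is that $v\notin S$, which follows from $v$ being adjacent to every vertex of $S$ and $G$ having no loops. Also, if $S=\emptyset$ the claim is trivial since $\omega\ge 2$.
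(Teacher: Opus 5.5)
Your proof is correct and follows the same route as the paper's one-line argument: $G[S]$ is chordal because any hole in it would be a $C_4$ that, together with $v$, induces $K_1+C_4$, so $\chi(G[S])=\omega(G[S])\le\omega-1$ via the clique $K\cup\{v\}$. You simply spell out the details (including $v\notin S$) that the paper leaves implicit.
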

	\begin{proof}[Proof] It is clear that $G[S]$ is chordal and so $\chi(G[S])=\omega(G[S])\le\omega-1$. 
	\end{proof}
	
	\begin{proof}[Proof of Theorem~\ref{short hole-1}] We may assume that $G$ is connected. A levelling $L$ is a sequence of disjoint subsets of vertices $(L_0,L_1,\ldots)$ such that $|L_0|=1$ and every vertex in $L_i$ has a parent in $L_{i-1}$, and no vertex in $L_i$ has a neighbor in $L_j$ for $j<i-1$. The proof is by analyzing this leveling, in particular, looking at a level and the previous two levels. We are going to prove that 
		
	For each $k$, $\chi(L_k)\le 2\omega(\omega-1)$. 
		
	This is trivially true for $k=0$. Consider $k=1$. In fact, $L_1$ is the set of neighbors of the vertex in $L_0$, and hence $\chi(L_1)\le\omega-1$, by Lemma~\ref{L-4-2}. Now let $k\ge2$, we will show how to color the vertices in $L_k$ with $ 2\omega(\omega-1)$ colors. Since we can use the same set of colors for each component of $L_k$, we may assume that $L_k$ is connected. 
		
	For $0\leq i\leq k-1$ and for every vertex $v \in L_i$, there exists $u \in L_{i+1}$ such that $v$ is its only parent (or else we could just delete $v$). (Note that we are coloring only vertices in $L_k$, and deleting vertices in the previous levels does not change the chromatic number of $G[L_k]$.) We perform this deletion iteratively. Starting from $L_0$, we remove vertices from $L_i$ one by one until each remaining vertex $v \in L_i$ satisfies the above criterion: it must be the unique parent of some vertex $u \in L_{i+1}$. We then proceed to apply the same procedure to $L_{i+1}$, and so on.
	This process ensures that for every $0 \leq i \leq k-1$, each vertex $u \in L_{i+1}$ has a unique parent in $L_i$.
	
	For the case $k=2$, we have $\chi(L_{k-1})=\chi(L_{1}) \leq 2(\omega-1)$. Now consider $k>2$ and let $x\in L_{k-2}$. Let $y$ be a child of $x$ such that $x$ is its only parent. We partition $L_{k-1}-\{y\}$ into $A(=N_{L_{k-1}}(y))$ and $B(=L_{k-1}-A-\{y\})$. By Lemma~\ref{L-4-2}, $\chi(G[A])\le\omega-1$. Suppose there is a vertex $z\in B$ that is not adjacent to $x$. Let $a$ be a parent of $z$. Note that $a \neq x$. Now there is a path between $a$ and $x$ with interior in $L_0\cup L_1\cup \ldots\cup L_{k-3}$ or the path is $a-x$, let $P$ be a shortest such path. Also, there is a path between $y$ and $z$ with interior in $L_k$ (this is guaranteed by the connectedness of $L_k$), let $P'$ be a shortest such path. Now $a-P-x-y-P'-z-a$ is a hole of Length at least 5, which is impossible. Hence we conclude that $x$ is complete to $B$. By Lemma~\ref{L-4-2}, $\chi(G[B\cup\{y\}])=\omega(G[B])\le \omega-1$. By using different colors for $A$ and $B\cup\{y\}$, we see that $L_{k-1}$ can be colored with at most $2(\omega-1)$ colors. 
	
	Now we partition $L_k$ into sets $A_1,\ldots,A_{2(\omega-1)}$ as follows: A vertex is in $A_i$ if $i$ is the smallest of the colors of its neighbors in $L_{k-1}$.
	Since $G$ is short-holed, $G$ is odd hole-free. Since $G$ is ($K_1+C_4$)-free, $G$ is odd antihole free except $\overline{C_7}$. We will show that each $G[A_i]$ contains no $\overline{C_7}$, then $G[A_i]$ is perfect by the strong perfect graph theorem, and consequently $\chi(G[A_i])\le\omega$.

	If $v$ is colored by $i$, we denote $v$ by $v(i)$.
	\begin{clm}\label{clm-1-1}
	Consider a clique $C \subset A_i$ with vertices $c_1, \ldots, c_t$ ($t \geq 1$) such that each $c_j$ has a parent $c'_j(i)$. Then there is a vertex in $\{c'_1(i), \ldots, c'_t(i)\}$ that is adjacent to all vertices of $C$.
	\end{clm}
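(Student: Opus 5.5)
The plan is to induct on $t$, using the short-holed hypothesis to exclude the one configuration that could block the claim. First, the hypotheses make sense. Since $c_j\in A_i$, the smallest colour among the neighbours of $c_j$ in $L_{k-1}$ is $i$, so $c_j$ has a parent coloured $i$. Fix one such parent $c'_j(i)$ for each $j$. All vertices coloured $i$ lie in one colour class of the colouring of $L_{k-1}$, so any two distinct vertices among $c'_1(i),\ldots,c'_t(i)$ are nonadjacent.

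The basic tool is a hole construction. Let $p,q\in L_{k-1}$ be distinct vertices coloured $i$. Then $p\not\sim q$. Every vertex of $L_{k-1}$ has a chain of parents reaching the unique vertex of $L_0$. Hence there is a path from $p$ to $q$ whose interior lies in $L_0\cup\cdots\cup L_{k-2}$; let $P$ be a shortest such path. Since $p\not\sim q$, $P$ has at least one interior vertex, and being shortest it is chordless.

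Now suppose there are adjacent $c,c'\in C$ with $p\sim c$, $p\not\sim c'$, $q\sim c'$ and $q\not\sim c$. Consider the cycle $p-c-c'-q-P-p$. Vertices of $L_k$ have no neighbours in $L_j$ for $j\le k-2$. So $c$ and $c'$ are anticomplete to the interior of $P$. The only possible chords are $pc'$, $qc$ and $pq$, and all three are absent. The cycle is therefore a hole of length at least $5$, contradicting that $G$ is short-holed.

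With this tool, the induction runs as follows.
\begin{itemize}
\item For $t=1$ there is nothing to prove: $c'_1(i)$ is adjacent to $c_1$.
\item For $t\ge 2$, apply the induction hypothesis to the clique $\{c_1,\ldots,c_{t-1}\}$. This gives a parent $p=c'_j(i)$, with $j\le t-1$, that is complete to $\{c_1,\ldots,c_{t-1}\}$. If $p\sim c_t$, then $p$ is complete to $C$ and we are done.
\item Otherwise $p\not\sim c_t$. Let $q=c'_t(i)$. Then $q\ne p$, because $q\sim c_t$ and $p\not\sim c_t$.
\item If $q$ is complete to $C$, we are done. Otherwise there is $s\le t-1$ with $q\not\sim c_s$.
\item Then $p\sim c_s$, $p\not\sim c_t$, $q\sim c_t$, $q\not\sim c_s$, and $c_s\sim c_t$ since $C$ is a clique. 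This is exactly the forbidden configuration above, with $c=c_s$ and $c'=c_t$. The contradiction shows that some chosen parent is complete to $C$.
\end{itemize}

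The main obstacle is checking that the cycle built in the tool is genuinely induced and has length at least $5$. For this, three facts are needed. The same colour class forces $p\not\sim q$, which gives length at least $5$. Minimality of $P$ removes chords inside $P$. The levelling property removes edges from $c$ and $c'$ to the interior of $P$. Once these are in place, the remaining steps are routine.
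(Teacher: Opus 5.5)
Your proof is correct and follows essentially the same route as the paper. It argues by induction on $t$, and the contradiction comes from the cycle $p-c-c'-q-P-p$: this is a hole of length at least $5$, because $p,q$ are distinct nonadjacent vertices of colour $i$ that each see exactly one end of the edge $cc'$, and $P$ is a shortest $p$--$q$ path through $L_0\cup\cdots\cup L_{k-2}$. The differences are minor: the paper applies the induction hypothesis to both $\{c_1,\ldots,c_{t-1}\}$ and $\{c_2,\ldots,c_t\}$ and uses the edge $c_1c_t$, whereas you apply it once and take $q=c'_t(i)$ directly, and you explicitly check that the cycle is chordless, which the paper leaves implicit.
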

	We will prove the Claim by induction on the number of vertices of $C$. For the induction basis, the Claim is obviously true for $t = 1$. Consider a clique $C$ as in the Claim (with $t \geq 2$). Let $C_1 = \{c_1, \ldots, c_{t-1}\}$, $C_2 = \{c_2, \ldots, c_t\}$. By the induction hypothesis, there is a vertex $c'_p(i) \in \{c'_1(i), \ldots, c'_{t-1}(i)\}$ that is adjacent to all of $C_1$. Similarly, there is a vertex $c'_q(i) \in \{c'_2(i), \ldots, c'_t(i)\}$ that is adjacent to all of $C_2$. We may assume $c'_p(i) \not\sim c_t$ and $c'_q(i) \not\sim c_1$, for otherwise we are done. Let $Q$ be a shortest path between $c'_p(i)$ and $c'_q(i)$ with interior vertices in $L_0 \cup L_1 \cup \ldots \cup L_{k-2}$. Now, $c_1 - c_t - c'_q(i) - Q - c'_p(i) - c_1$ is a hole of length at least $5$, a contradiction. This proves Claim~\ref{clm-1-1}.

	\begin{clm}\label{clm-1-2}
		$G[A_i]$ contains no $\overline{C_7}$.
	\end{clm}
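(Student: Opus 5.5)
The plan is to argue by contradiction. Suppose $X=\{x_1,\ldots,x_7\}\subseteq A_i$ induces $\overline{C_7}$, where $x_j\sim x_l$ if and only if $|j-l|\not\equiv \pm1 \pmod 7$. Since every vertex of $A_i$ has a neighbour of colour $i$ in $L_{k-1}$, each $x_j$ has a parent coloured $i$. The proof would use three facts:
\begin{enumerate}
\item[(a)] The vertices of $L_{k-1}$ coloured $i$ form a stable set, so any two distinct parents of colour $i$ are nonadjacent.
\item[(b)] Any two vertices of $L_{k-1}$ are joined by a path with interior in $L_0\cup\cdots\cup L_{k-2}$. Take a shortest one; as in the proof of Claim~\ref{clm-1-1}, it closes up with edges inside $L_k$ into a long hole.
\item[(c)] $G$ is $(K_1+C_4)$-free. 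For instance, $x_1-x_4-x_2-x_5-x_1$ is an induced $C_4$ inside $X$, and more generally $\{x_j,x_{j+1},x_{j+3},x_{j+4}\}$ induces a $C_4$ for every $j$.
\end{enumerate}
So no vertex outside $X$ can be complete to such a quadruple.

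\emph{Step 1 (crossing lemma).} Let $p\ne q$ be parents of colour $i$. Suppose there is an edge $x_ax_b$ of $G[X]$ with $p\sim x_a$, $p\not\sim x_b$, $q\sim x_b$ and $q\not\sim x_a$. Let $Q$ be a shortest $p$--$q$ path with interior in lower levels. Then $x_a-x_b-q-Q-p-x_a$ is a hole of length at least $5$, since $p\not\sim q$ by (a) and $L_k$ has no neighbours below $L_{k-1}$. This contradicts short-holedness. Hence no two colour-$i$ parents ``cross'' on an edge of $G[X]$.

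\emph{Step 2 (triangles and $C_4$'s).} The triangles of $\overline{C_7}$ are exactly the seven triples $\{j,j+2,j+4\}$ (indices mod $7$). By Claim~\ref{clm-1-1}, each such triangle $T_j$ has a colour-$i$ parent $p_j$ complete to it. By (c), $p_j$ is not complete to any of the $C_4$'s $\{x_l,x_{l+1},x_{l+3},x_{l+4}\}$. For $T_1=\{x_1,x_3,x_5\}$ this forces, for example, $p_1\not\sim x_2$ or $p_1\not\sim x_4$ (via $\{1,2,4,5\}$), and $p_1\not\sim x_6$ or $p_1\not\sim x_7$ (via $\{3,4,6,7\}$ and $\{5,6,1,2\}$, combined appropriately). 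Thus each $p_j$ misses at least one vertex in specific pairs. I would record, for each $j$, the possible neighbourhoods $N(p_j)\cap X$: they contain $T_j$ and no induced $C_4$ of $X$.

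\emph{Step 3 (contradiction).} Take the vertex $x_b$ that $p_1$ misses, say $x_2$ or $x_4$. It lies in some triangle $T_j$ whose parent $p_j$ sees $x_b$, so $p_j\ne p_1$. Now locate an edge $x_ax_b$ with $x_a\in N(p_1)\setminus N(p_j)$. Because $\overline{C_7}$ is so dense (each vertex is adjacent to $4$ of the other $6$), $x_a$ can usually be chosen inside $T_1$. If $p_j$ is complete to $T_1\cup\{x_b\}$, then $p_j$ sees a triangle plus an extra vertex; one checks that this, together with $T_j$, makes $p_j$ complete to an induced $C_4$ of $X$, contradicting (c). Otherwise Step 1 gives the contradiction. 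By the rotational symmetry of $\overline{C_7}$, only a few cases arise.

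The main obstacle is Step 3: organising the case analysis on the neighbourhoods $N(p_j)\cap X$ so that either Step 1 or the $(K_1+C_4)$-freeness always applies. If a direct choice fails, I would instead choose the parent $p$ with $|N(p)\cap X|$ maximal. By Step 1 the neighbourhoods of colour-$i$ parents are nested along edges of $G[X]$. Combining this with the fact that every triangle is covered and $\overline{C_7}$ is connected and dense, I would aim to show that $p$ is complete to $X$. That contradicts (c), since $X$ contains an induced $C_4$.
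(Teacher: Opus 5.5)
\textbf{There is a genuine gap: Step 3 is not carried out, and the two assertions it relies on are false.} Your ingredients are the right ones and are exactly the paper's: Claim~\ref{clm-1-1} for the seven triangles $T_j$, $(K_1+C_4)$-freeness, and the crossing argument of your Step~1. The paper uses precisely the hole $x_a-x_b-q-Q-p-x_a$. But Step~3 carries all the content of the claim.
\begin{itemize}
\item ``Otherwise Step~1 gives the contradiction'' needs the vertex of $T_1$ missed by $p_j$ to be adjacent to $x_b$. For $x_b=x_2$, however, only $x_5$ of $T_1=\{x_1,x_3,x_5\}$ is adjacent to $x_2$.
\item ``$p_j$ complete to $T_1\cup\{x_b\}$, together with $T_j$, is complete to an induced $C_4$'' also fails. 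Take $N(p_1)\cap X=\{x_1,x_3,x_5\}$, $x_b=x_4$, $T_j=T_4=\{x_4,x_6,x_1\}$ and $N(p_4)\cap X=\{x_1,x_3,x_4,x_5,x_6\}$. This last set contains none of the seven $C_4$'s $\{x_l,x_{l+1},x_{l+3},x_{l+4}\}$. Since $N(p_1)\cap X\subseteq N(p_4)\cap X$, Step~1 says nothing about this pair.
\end{itemize}
So comparing $p_1$ with a single other parent cannot work, and the proposal gives no scheme for combining more parents. The fallback is only a goal. Step~1 forbids crossings on individual edges but does not make neighbourhoods nested. Nothing you prove pushes a parent with maximal $|N(p)\cap X|$ beyond a $5$-vertex $C_4$-free trace. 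A smaller slip: in Step~2, ``$p_1\not\sim x_6$ or $p_1\not\sim x_7$'' does not follow, since $\{x_1,x_3,x_5,x_6,x_7\}$ contains no induced $C_4$.

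\textbf{The missing idea} is to upgrade triangles to diamonds before comparing parents; this is the paper's intermediate statement (\ref{eqa-1-1}). In your labelling it says: for every $j$, some colour-$i$ parent is complete to $T_{j-2}\cup T_j$ or to $T_j\cup T_{j+2}$. To prove it, let $q$ be a parent of $T_{j-2}$ and $r$ a parent of $T_{j+2}$. If the statement fails, then $q\not\sim x_{j+4}$ and $r\not\sim x_j$, and Step~1 on the edge $x_jx_{j+4}$ gives a contradiction.

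With this, two parents suffice.
\begin{itemize}
\item Up to rotation, some $w$ is complete to $T_7\cup T_2=\{x_2,x_4,x_6,x_7\}$.
\item Apply the diamond statement at $j=1$; both options contain $x_3$. Using the reflection $x_j\mapsto x_{6-j}$, which fixes $\{x_2,x_4,x_6,x_7\}$, some $w'$ is complete to $T_6\cup T_1=\{x_1,x_3,x_5,x_6\}$.
\item $(K_1+C_4)$-freeness gives $w\not\sim x_3$ and $w'\not\sim x_2$, so $w\ne w'$.
\item Applying Step~1 successively to the edges $x_3x_7$, $x_1x_4$, $x_2x_5$ forces $w$ or $w'$ to be complete to the induced $C_4$ $\{x_1,x_4,x_5,x_7\}$, which is the final contradiction.
\end{itemize}
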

		
	Suppose to the contrary, let $C^*$ be the $\overline{C_7}$ with vertices $v_0,v_1,\ldots,v_6$ and edges $v_tv_{t+1},v_tv_{t+2}$ for each $t$ modulo 7. By Claim~\ref{clm-1-1}, three vertices of any triangle in $C^*$ have a common parent colored $i$ in $L_{k-1}$. We will prove that
	\begin{equation} \label{eqa-1-1}
	\mbox{either $v_t,v_{t+1},v_{t+2},v_{t+3}$, or $v_{t-1},v_t,v_{t+1},v_{t+2}$ have a common parent colored $i$ in $L_{k-1}$}.
	\end{equation}
			
	Suppose to the contrary, let $t=0$ be a counterexample to (\ref{eqa-1-1}). Let $w_1(i)$ be a common parent of $v_0,v_1,v_2$, $w_2(i)$ be a common parent of $v_6,v_0,v_1$ and $w_3(i)$ be a common parent of $v_1,v_2,v_3$, then $|w_1(i),w_2(i),w_3(i)|=3$. Now, $w_2(i)$ is a parent of $v_0$ but not of $v_2$, and $w_3(i)$ is a parent of $v_2$ but not of $v_0$. Note that $w_2(i) w_3(i)$ is a non-edge (both $w_2(i)$ and $w_3(i)$ received the same color in a proper coloring). Let $Q$ be a shortest path between $w_2(i)$ and $w_3(i)$ with interior in $L_0\cup L_1\cup \ldots\cup L_{k-2}$. Now $v_0-v_2-w_3(i)-Q-w_2(i)-v_0$ is a hole of length at least $5$, a contradiction. This proves (\ref{eqa-1-1}).
			
	By symmetry, $v_0,v_1,v_2,v_3$ have a common parent $w_1(i)$. Since $G$ is ($K_1+C_4$)-free, $v_5\not\sim w_1(i)$. Therefore, any common parent of $v_4,v_5,v_6$ is distinct from $w_1(i)$. Without loss of generality, by (\ref{eqa-1-1}), $v_3,v_4,v_5,v_6$ have a common parent $w_4(i)$ which is distinct from $w_1(i)$. It is clear that $v_1\not\sim w_4(i)$, otherwise $\{w_4(i),v_1,v_3,v_4,v_6\}$ induces a $K_1+C_4$.
			
	Since $v_0\sim v_5$, either $v_5\sim w_1(i)$ or $v_0\sim w_4(i)$. It follows that $v_0\sim w_4(i)$ because $v_5\not\sim w_1(i)$. Since $v_2\sim v_4$, either $v_4\sim w_1(i)$ or $v_2\sim w_4(i)$. If $v_2\sim w_4(i)$, $\{w_4(i),v_6,v_0,v_2,v_4\}$ induces a $K_1+C_4$. Therefore, $v_4\sim w_1(i)$. Since $v_6\sim v_1$, either $v_6\sim w_1(i)$ or $v_1\sim w_4(i)$. It follows that $v_6\sim w_1(i)$ because $v_1\not\sim w_4(i)$, but now $\{w_1(i),v_6,v_0,v_2,v_4\}$ induces a $K_1+C_4$, a contradiction. This proves Claim~\ref{clm-1-2}.
		
	By using different sets of colors for different $A_i$, we have $\chi(L_k)\le 2(\omega-1)\cdot\chi(G[A_i]) \le 2\omega(\omega-1)$. By using one set of colors for odd levels and another set for even levels, we conclude that $\chi(G)\le 4\omega(\omega-1)$. This proves  Theorem~\ref{short hole-1}.
	\end{proof}
	
	\begin{proof}[Proof of Theorem~\ref{short hole-2}] $\forall v\in V(G)$, $G-N[v]$ is $K_3$-free and does not contain all holes of length at least 5. Thus $G-N[v]$ is a bipartite graph. Consequently, $G-N(v)$ is a bipartite graph. Because $\omega(N(v))\le \omega(G)-1$, we deduce $\chi(G)\le 2\omega-1$ by a simple induction.
	\end{proof}
	
	\begin{proof}[Proof of Theorem~\ref{short hole-3}] Similar to the proof of Theorem~\ref{short hole-1}. We may assume that $G$ is connected. A levelling $L$ is a sequence of disjoint subsets of vertices $(L_0,L_1,\ldots)$ such that $|L_0|=1$ and every vertex in $L_i$ has a parent in $L_{i-1}$, and no vertex in $L_i$ has a neighbor in $L_j$ for $j<i-1$.  We are going to prove that 
		
		For each $k$, $\chi(L_k)\le 8\omega-12$. 
	
	This is trivially true for $k=0$. Consider $k=1$. In fact, $L_1$ is the set of neighbors of the vertex in $L_0$, and hence $\chi(L_1)\le2(\omega-1)-1=2\omega-3$ by Theorem~\ref{short hole-2}. 
	
	Assume that $L_k$ is connected. For $0\leq i\leq k-1$ and for every vertex $v \in L_i$, there exists $u \in L_{i+1}$ such that $v$ is its only parent (or else we could just delete $v$). 
	For the case $k=2$, we have $\chi(L_{k-1})=\chi(L_{1}) \leq 4\omega-6$. Now consider $k>2$ and let $x\in L_{k-2}$. Let $y$ be a child of $x$ such that $x$ is its only parent. We partition $L_{k-1}-\{y\}$ into $A(=N_{L_{k-1}}(y))$ and $B(=L_{k-1}-A-\{y\})$. Similar to the proof of Theorem~\ref{short hole-1}, $x$ is complete to $B$. Then $\chi (L_{k-1})\le \chi(G[A])+\chi(G[B\cup\{y\}])\le 4\omega-6$ by Theorem~\ref{short hole-2}. 
	
	Now we partition $L_k$ into sets $A_1,\ldots,A_{4\omega-6}$ as follows: A vertex is in $A_i$ if $i$ is the smallest of the colors of its neighbors in $L_{k-1}$. If $G[A_i]$ contains a triangle $v_1,v_2,v_3$, by the proof of claim~\ref{clm-1-1}, there exists a common parent $w(i)\in L_{k-1}$ of $v_1,v_2,v_3$. $w(i)$ has a parent $u\in L_{k-2}$. Now $\{w(i),u,v_1,v_2,v_3\}$ induces a $K_1+(K_1\cup K_3)$, a contradiction. So $G[A_i]$ is $K_3$-free. Since $G[A_i]$ is $K_3$-free and odd hole-free, $G[A_i]$ is bipartite. By using different sets of colors for different $A_i$, we conclude that $\chi(L_k)\le 8\omega-12$. 
	
	By using one set of colors for odd levels and another set for even levels, we conclude that $\chi(G)\le 16\omega-24$.
	\end{proof}
	
   We conclude with a few remarks. The relationship between the strong perfect graph theorem and odd hole-free graphs has led to this class being widely studied.  By forbidding some more subgraphs, we obtain the $\chi-$ binding function of the odd hole-free graphs.  But there is still some distance to prove Ho\`ang's conjecture completely. Further, it will be interesting to study $\chi$-binding functions for the class of (odd hole, hammer)-free graphs or (odd hole, $K_{2, 3}$)-free graphs.

\section{Acknowledgment }	
The authors are very grateful to the anonymous referees for their valuable comments and corrections, which led to an improvement of the original manuscript.

\end{document}